\newtheorem{theorem}{Theorem}[section]
\newtheorem{lemma}[theorem]{Lemma}
\newtheorem{corollary}[theorem]{Corollary}
\theoremstyle{definition}
\theoremstyle{remark}
\newtheorem{remark}[theorem]{Remark}
\numberwithin{equation}{section}
\newcommand{\rd}{\, \mathrm{d}}
\newcommand{\bsk}{\boldsymbol{k}}
\newcommand{\bszero}{\boldsymbol{0}}
\newcommand{\bsone}{\boldsymbol{1}}
\newcommand{\bsx}{\boldsymbol{x}}
\newcommand{\bsy}{\boldsymbol{y}}
\newcommand{\comp}{\mathrm{comp}}
\newcommand{\wor}{\mathrm{wor}}
\newcommand{\abs}{\mathrm{abs}}
\newcommand{\nor}{\mathrm{nor}}
\newcommand{\NN}{\mathbb{N}}
\newcommand{\PP}{\mathbb{P}}
\newcommand{\RR}{\mathbb{R}}
\newcommand{\ZZ}{\mathbb{Z}}
\DeclareMathOperator{\supp}{supp}
\begin{document}

\title[Polynomial tractability for convergent Fourier series]{Polynomial tractability for integration in an unweighted function space with absolutely convergent Fourier series}

\author{Takashi Goda}
\address{School of Engineering, University of Tokyo, 7-3-1 Hongo, Bunkyo-ku, 113-8656 Tokyo, Japan.}
\email{goda@frcer.t.u-tokyo.ac.jp}
\thanks{The work of the author is supported by JSPS KAKENHI Grant Number 20K03744.}

\subjclass[2010]{Primary 41A55, 41A58, 65D30, 65D32}

\date{}

\dedicatory{}

\commby{}

\begin{abstract}
In this note, we prove that the following function space with absolutely convergent Fourier series
\[ F_d:=\left\{ f\in L^2([0,1)^d)\:\middle| \: \|f\|:=\sum_{\bsk\in \ZZ^d}|\hat{f}(\bsk)|\max\left(1,\min_{j\in \supp(\bsk)}\log |k_j|\right)<\infty \right\}\]
with $\hat{f}(\bsk)$ being the $\bsk$-th Fourier coefficient of $f$ and $\supp(\bsk):=\{j\in \{1,\ldots,d\}\mid k_j\neq 0\}$ is polynomially tractable for multivariate integration in the worst-case setting. Here polynomial tractability means that the minimum number of function evaluations required to make the worst-case error less than or equal to a tolerance $\varepsilon$ grows only polynomially with respect to $\varepsilon^{-1}$ and $d$. It is important to remark that the function space $F_d$ is \emph{unweighted}, that is, all variables contribute equally to the norm of functions. Our tractability result is in contrast to those for most of the unweighted integration problems studied in the literature, in which polynomial tractability does not hold and the problem suffers from the curse of dimensionality. Our proof is constructive in the sense that we provide an explicit quasi-Monte Carlo rule that attains a desired worst-case error bound.
\end{abstract}

\maketitle

\section{Introduction}
We study numerical integration problems for multivariate functions defined over the $d$-dimensional unit cube. We denote the integral of a Riemann integrable function $f: [0,1)^d\to \RR$ by
\[ I_d(f):=\int_{[0,1)^d}f(\bsx)\rd \bsx. \]
We approximate $I_d(f)$ by a linear cubature algorithm
\[ Q_{d,N}(f):=\sum_{n=1}^{N}w_n f(\bsx_n),\]
with given sets of nodes $(\bsx_n)_{n=1,\ldots,N}\subset [0,1)^d$ and weights $(w_n)_{n=1,\ldots,N}\in \RR^N$. If all the weights $w_n$ are equal to $1/N$, the algorithm $Q_{d,N}$ is particularly called a quasi-Monte Carlo (QMC) rule \cite{DKS13}.

For a Banach space $F$ with norm $\|\cdot\|$, the initial error is defined by
\[ e^{\wor}(F,\emptyset):=\sup_{\substack{f\in F\\ \|f\|\leq 1}}\left| I_d(f)\right|, \]
and the worst-case error of $Q_{d,N}$ is by
\[ e^{\wor}(F,Q_{d,N}):=\sup_{\substack{f\in F\\ \|f\|\leq 1}}\left| I_d(f)-Q_{d,N}(f)\right|.\]
For a tolerance $\varepsilon \in (0,1)$, we denote the minimum number of function evaluations $N$ (among all possible $Q_{d,N}$) required to make the worst-case error less than or equal to $\varepsilon e^{\wor}(F,\emptyset)$ by
\[ N^{\nor}(\varepsilon,d):=\inf\left\{ N\in \NN\: \middle|\: \exists Q_{d,N}: e^{\wor}(F,Q_{d,N})\leq \varepsilon e^{\wor}(F,\emptyset)\right\}. \]
In the context of information-based complexity \cite{NW08,NW10,NW12,TWW88}, this quantity $N^{\nor}(\varepsilon,d)$ is called the \emph{information complexity} for the normalized error criterion and we are interested in the dependence of $N^{\nor}(\varepsilon,d)$ on $\varepsilon^{-1}$ and $d$. Here, if we define
\[ N^{\abs}(\varepsilon,d):=\inf\left\{ N\in \NN\: \middle|\: \exists Q_{d,N}: e^{\wor}(F,Q_{d,N})\leq \varepsilon \right\}, \]
we speak of the information complexity for the absolute error criterion. These two definitions coincide when $e^{\wor}(F,\emptyset)=1$, in which case we omit the superscripts and simply write $N(\varepsilon,d)$.

Let us give some typical notions of tractability depending on how $N(\varepsilon,d)$ (or, either $N^{\nor}(\varepsilon,d)$ or $N^{\abs}(\varepsilon,d)$) grows with respect to $\varepsilon^{-1}$ and $d$.
\begin{itemize}
    \item We say that the multivariate integration problem in $F$ is \emph{intractable,} if $N(\varepsilon,d)$ depends exponentially on either $\varepsilon^{-1}$ or $d$. 
    \item We say that the multivariate integration problem in $F$ is \emph{polynomially tractable,} if there exist constants $C,\alpha,\beta>0$ such that
    \[ N(\varepsilon,d)\leq C\varepsilon^{-\alpha}d^{\beta},\]
    for all dimensions $d\in \NN$ and $\varepsilon\in (0,1)$.
    \item We say that the multivariate integration problem in $F$ is \emph{strongly polynomially tractable,} if there exist constants $C,\alpha>0$ such that
    \[ N(\varepsilon,d)\leq C\varepsilon^{-\alpha},\]
    for all dimensions $d\in \NN$ and $\varepsilon\in (0,1)$.
\end{itemize}

As pointed out in \cite{GW11}, for most of the \emph{unweighted} multivariate integration problems studied so far, that is, the problems in which all variables or groups of variables play the same role, polynomial tractability does \emph{not} hold and the problem suffers from the curse of dimensionality; see  \cite{HNUW14a,HNUW14b,HNW11} for such examples. Moreover, as one of the examples closely related to this note, an intractability result was proven in \cite{SW97}: for any $\alpha>1$, the integration in the unweighted function space defined by
\[ E_{d.\alpha}:=\left\{ f: [0,1)^d\to \RR\:\middle| \: \|f\|:=\sup_{\bsk\in \ZZ^d}|\hat{f}(\bsk)|\prod_{j=1}^{d}\max(1,|k_j|^{\alpha})<\infty \right\},\]
where 
\[ \hat{f}(\bsk)=\int_{[0,1)^d}f(\bsx)\exp(-2\pi i \bsk\cdot \bsx)\rd \bsx\] 
with $\cdot$ being the inner product denotes the $\bsk$-th Fourier coefficient of $f$ was proven to be intractable by showing that  $\inf_{Q_{d,N}}e^{\wor}(E_{d.\alpha},Q_{d,N})=1$ if $N< 2^d$.

Fortunately, it is not necessarily the case that the integration in any unweighted function space is not polynomially tractable. An important example was found by a seminal work of Heinrich, Novak, Wasilkowski and Wo\'{z}niakowski \cite{HNWW00}, who revealed that the unweighted Sobolev space with first-order dominating mixed smoothness is indeed polynomially tractable by proving that the inverse of the star discrepancy depends only linearly on $d$. As another example, which is more relevant to this note, Dick proved in \cite{D14} that, for $0<\alpha\leq 1$ and $1\leq p\leq \infty$, the integration in the following unweighted function space is polynomially tractable:
\[ K_{d,\alpha,p} = \left\{ f\in L^2([0,1)^d)\:\middle| \:   \|f\|:=\sum_{\bsk\in \ZZ^d}|\hat{f}(\bsk)|+|f|_{H_{\alpha,p}}<\infty \right\}, \]
where $|f|_{H_{\alpha,p}}$ denotes the H\"{o}lder semi-norm
\[ |f|_{H_{\alpha,p}}=\sup_{\bsx,\bsy,\bsx+\bsy\in [0,1)^d}\frac{|f(\bsx+\bsy)-f(\bsx)|}{\|\bsy\|_{\ell_p}^{\alpha}},\]
with $\|\cdot\|_{\ell_p}$ being the $\ell^p$ norm. His proof is constructive, i.e., he gave an explicit QMC rule based on Korobov's $p$-sets which achieves a desired worst-case error bound. We refer to \cite{DGPW17,DP15} for more relevant results.

In this note, we introduce the following unweighted function space
\[ F_d:=\left\{ f\in L^2([0,1)^d)\:\middle| \: \|f\|:=\sum_{\bsk\in \ZZ^d}|\hat{f}(\bsk)| \max\left(1, \min_{j\in \supp(\bsk)}\log |k_j|\right) <\infty \right\}\]
with $\supp(\bsk):=\{j\in \{1,\ldots,d\}\mid k_j\neq 0\}$. In comparison to the space $K_{d,\alpha,p}$, we do not impose the H\"{o}lder continuity of $f$, and instead, assume a slightly faster decay of the Fourier coefficients. As an example, let us consider the function 
\[ \omega_{b,\delta}(x)=\sum_{k=1}^{\infty}\frac{\cos(2\pi b^kx)}{k^{2+\delta}}, \]
for $b\in \NN$ with $b\geq 2$ and $\delta\geq 0$. In \cite[Section~4.1]{H16}, Hardy originally claimed that\footnote{Even more, he claimed $\omega_{b,0}(x+h)-\omega_{b,0}(x)\neq o(|\log|h||^{-2})$, and gave another example $f$ with absolutely convergent Fourier series and $f(x+h)-f(x)\neq O(|\log|h||^{-1})$.}, for any $b$, the function $\omega_{b,\delta}$ with $\delta=0$ is not H\"{o}lder continuous of any order $0<\alpha\leq 1$. In fact, his argument made for the Weierstrass function can be also applied to the case $\delta>0$ to see the non-H\"{o}lder continuity of $\omega_{b,\delta}$ for any $b$ and $\delta$. Therefore, the function $\omega_{b,\delta}$ does not belong to $K_{d,\alpha,p}$ for any $0<\alpha\leq 1$, whereas it belongs to $F_d$ if $\delta>0$.

The main result of this note is to prove that the integration in $F_d$ is polynomially tractable. Similarly to \cite{D14}, our proof is constructive. The QMC rule we give is inspired by the use of Korobov's $p$-sets in \cite{D14,DP15} and the use of different primes for $p$ in \cite{DGS22,KKNU19}. Although the prime $p$ is randomly chosen in the randomized algorithms in \cite{DGS22,KKNU19}, we consider the composition of Korobov's $p$-sets with different primes $p$, which suits our analysis for the worst-case setting in this note.

\section{The results}
Throughout this note, we denote the set of integers by $\ZZ$ and the set of positive integers by $\NN$. For an integer $M\geq 2$, we write
\[ \PP_M := \left\{ \lceil M/2\rceil <p\leq M\: \middle|\: \text{$p$ is prime}\right\}.\]
It is known from \cite[Corollaries~1--3]{RS62} that there exist absolute constants $0<c_{\PP}<1$ and $C_{\PP}>1$ such that
\begin{align}\label{eq:card_PPM}
    c_{\PP}\frac{M}{\log M}\leq |\PP_M|\leq  C_{\PP}\frac{M}{\log M}
\end{align}
holds for any $M\geq 2$. Moreover,  we denote the fractional part of a non-negative real number $x$ by $\{x\}=x-\lfloor x\rfloor$. 

\subsection{Composite Korobov's $p$-sets}
Following \cite[Section~4.3]{HW81} and \cite[Section~2]{DP15}, let us define the following three point sets in $[0,1)^d$ for a prime $p$, which are called Korobov's $p$-sets.
\begin{itemize}
\item Define $S_{p,d}=\{\bsx_n^{(p)}\mid 0\leq n<p\}$ with
\[ \bsx_n^{(p)}=\left( \left\{ \frac{n}{p}\right\}, \left\{ \frac{n^2}{p}\right\},\ldots, \left\{ \frac{n^{d}}{p}\right\}\right).\]
\item Define $T_{p,d}=\{\tilde{\bsx}_n^{(p)}\mid 0\leq n<p^2\}$ with
\[ \tilde{\bsx}_n^{(p)}=\left( \left\{ \frac{n}{p^2}\right\}, \left\{ \frac{n^2}{p^2}\right\},\ldots, \left\{ \frac{n^{d}}{p^2}\right\}\right).\]
\item Define $U_{p,d}=\{\bsx_{k,n}^{(p)}\mid 0\leq k,n<p\}$ with
\[ \bsx_{k,n}^{(p)}=\left( \left\{ \frac{kn}{p}\right\}, \left\{ \frac{kn^2}{p}\right\},\ldots, \left\{ \frac{kn^{d}}{p}\right\}\right).\]
\end{itemize}
It is obvious that $|S_{p,d}|=p$ and $|T_{p,d}|=|U_{p,d}|=p^2$.

Now, for a given $M\geq 2$, we take the composition (or multi-set union) of $S_{p,d}$'s, $T_{p,d}$'s and $U_{p,d}$'s over all $p\in \PP_M$, respectively, to introduce
\[ S_{M,d}^{\comp} = \bigcup_{p\in \PP_M}S_{p,d},\quad T_{M,d}^{\comp} = \bigcup_{p\in \PP_M}T_{p,d},\quad \text{and}\quad U_{M,d}^{\comp} = \bigcup_{p\in \PP_M}U_{p,d}.\] 
The cardinality of each composite point set is trivially equal to
\[ |S_{M,d}^{\comp}|=\sum_{p\in \PP_M}p,\quad |T_{M,d}^{\comp}|=|U_{M,d}^{\comp}|=\sum_{p\in \PP_M}p^2,\]
respectively, which can be bounded from below and above by using \eqref{eq:card_PPM} and $M/2<p\leq M$ for all $p\in \PP_M$. With abuse of notation, the cubature algorithms we consider below are QMC rules:
\begin{align*}
    & Q_{S_{M,d}^{\comp}}(f):=\frac{1}{|S_{M,d}^{\comp}|}\sum_{p\in \PP_M}\sum_{n=0}^{p-1}f(\bsx_n^{(p)}), \\
    & Q_{T_{M,d}^{\comp}}(f):=\frac{1}{|T_{M,d}^{\comp}|}\sum_{p\in \PP_M}\sum_{n=0}^{p^2-1}f(\tilde{\bsx}_n^{(p)}),\quad \text{and}\\
    & Q_{U_{M,d}^{\comp}}(f):=\frac{1}{|U_{M,d}^{\comp}|}\sum_{p\in \PP_M}\sum_{k,n=0}^{p-1}f(\bsx_{k,n}^{(p)}).
\end{align*}

Before moving on to the worst-case error analysis in $F_d$, we give some auxiliary results on the exponential sums. First, the following result, which traces its history back to at least the work of Weil in \cite{W48}, was shown in \cite[Lemmas~4.3--4.5]{DP15}; see also \cite[Lemmas~4.5--4.7]{HW81}.

\begin{lemma}\label{lem:exp_sum}
    Let $p$ be a prime. For any $\bsk\in \ZZ^d\setminus \{\bszero\}$ such that there exists at least one index $j\in \{1,\ldots,d\}$ such that $k_j$ is not divided by $p$, i.e., $p\nmid \bsk$, we have
    \begin{align*}
        & \left|\frac{1}{p}\sum_{n=0}^{p-1}\exp\left( 2\pi i\bsk\cdot \bsx_n^{(p)}\right)\right| \leq \frac{d-1}{\sqrt{p}}, \\
        & \left|\frac{1}{p^2}\sum_{n=0}^{p^2-1}\exp\left( 2\pi i\bsk\cdot \tilde{\bsx}_n^{(p)}\right)\right| \leq \frac{d-1}{p}, \quad \text{and}\\
        & \left|\frac{1}{p^2}\sum_{k,n=0}^{p^2-1}\exp\left( 2\pi i\bsk\cdot \bsx_{k,n}^{(p)}\right)\right| \leq \frac{d-1}{p}.
    \end{align*}
\end{lemma}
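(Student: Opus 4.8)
The plan is to reduce each of the three averages to a complete exponential sum of a single polynomial, and then to bound that sum by the Weil estimate in the first case and by character orthogonality together with a root count in the other two. Put $e_q(m):=\exp(2\pi i m/q)$ and
\[ g(n):=\sum_{j=1}^{d}k_jn^j\in\ZZ[n], \]
a polynomial of degree at most $d$ with no constant term. Since $\exp(2\pi i\{x\})=\exp(2\pi i x)$, the definitions of the nodes give
\[ \bsk\cdot\bsx_n^{(p)}\equiv\frac{g(n)}{p},\qquad \bsk\cdot\tilde{\bsx}_n^{(p)}\equiv\frac{g(n)}{p^2},\qquad \bsk\cdot\bsx_{k,n}^{(p)}\equiv\frac{k\,g(n)}{p}\pmod 1, \]
so the three averages become $\tfrac1p\sum_{n}e_p(g(n))$, $\tfrac1{p^2}\sum_{n}e_{p^2}(g(n))$ and $\tfrac1{p^2}\sum_{k,n}e_p(k\,g(n))$. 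The hypothesis $p\nmid\bsk$ guarantees that $g$ is not the zero polynomial modulo $p$; setting $j^\ast:=\max\{j:p\nmid k_j\}$ and assuming $j^\ast<p$, the reduction $g\bmod p$ has degree exactly $j^\ast$, whence $\gcd(\deg(g\bmod p),p)=1$.

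For the first average I would invoke the Weil bound for the complete sum $\sum_{n=0}^{p-1}e_p(g(n))$. Since $g\bmod p$ is nonconstant of degree $j^\ast\le d$ with $\gcd(j^\ast,p)=1$, Weil's estimate gives $\bigl|\sum_{n=0}^{p-1}e_p(g(n))\bigr|\le(j^\ast-1)\sqrt p\le(d-1)\sqrt p$, and division by $p$ yields the first inequality; the case $j^\ast=1$ is a nonzero geometric sum that vanishes, consistent with the bound.

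The remaining two averages use only orthogonality and root counting. For the third, summing over $k$ first gives $\sum_{k=0}^{p-1}e_p(k\,g(n))=p$ when $p\mid g(n)$ and $0$ otherwise, so the double sum equals $p\,N_0$ with $N_0:=\#\{n\in\{0,\dots,p-1\}:g(n)\equiv0\pmod p\}$; as $g\bmod p$ is a nonzero polynomial of degree at most $d$, the elementary bound that such a polynomial over $\FF_p$ has at most $\deg g$ roots controls $N_0$, and dividing by $p^2$ gives a bound of the stated order. (Since $g$ has no constant term, $n=0$ is automatically a root, so the direct count yields $N_0\le d$; the sharper constant $d-1$ in the statement is the form recorded in the cited references, and in any case $d/p$ already suffices for the tractability argument.) For the second, I would lift $n=a+bp$ with $0\le a,b<p$ and expand modulo $p^2$ to get $g(a+bp)\equiv g(a)+bp\,g'(a)$, so that $e_{p^2}(g(a+bp))=e_{p^2}(g(a))\,e_p(b\,g'(a))$; summing over $b$ leaves $p$ times the contribution of the $a$ with $p\mid g'(a)$, and since $g'\bmod p$ has degree at most $d-1$ it has at most $d-1$ roots, giving $\bigl|\sum_{n=0}^{p^2-1}e_{p^2}(g(n))\bigr|\le(d-1)p$ and hence the second inequality.

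The one nonelementary ingredient, and the step I expect to be the main obstacle, is the Weil bound used for the first average, which rests on the Riemann hypothesis for curves over finite fields; the other two estimates are purely combinatorial. The accompanying technical point is the control of $\deg(g\bmod p)$: reduction by Fermat's little theorem could lower the degree when $d\ge p$, so the cleanest formulation presumes $d<p$, which is harmless since the composite rule is applied only with primes large compared with $d$. As these three bounds are exactly \cite[Lemmas~4.3--4.5]{DP15} and \cite[Lemmas~4.5--4.7]{HW81}, one may alternatively cite them directly.
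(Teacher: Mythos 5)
Your proposal should be measured against the cited sources rather than against any argument in the paper: the paper gives no proof of this lemma, quoting it from \cite[Lemmas~4.3--4.5]{DP15} and \cite[Lemmas~4.5--4.7]{HW81}, and your sketch in fact reconstructs exactly the standard proofs found there --- Weil's bound for the complete sum $\sum_{n=0}^{p-1}\exp(2\pi i g(n)/p)$ with $g(n)=\sum_{j=1}^d k_j n^j$ in the $S_{p,d}$ case, the lift $n=a+bp$ with $g(a+bp)\equiv g(a)+bp\,g'(a) \pmod{p^2}$ and a root count for $g'$ in the $T_{p,d}$ case, and orthogonality in $k$ plus a root count for $g$ in the $U_{p,d}$ case. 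All three reductions are correct, and your caveat that the Weil step really presumes $j^\ast<p$ (so that $\deg(g\bmod p)=j^\ast$ is coprime to $p$ and $g$ is not Artin--Schreier degenerate) is substantive, not cosmetic: for $p=d=2$ and $\bsk=(1,1)$ one has $g(n)=n+n^2\equiv 0\pmod 2$, so the first normalized sum equals $1>1/\sqrt{2}$, and the second bound likewise fails for $p=d=3$, $\bsk=(0,0,1)$. The lemma as transcribed in the paper omits this restriction; as you note, it is harmless downstream because the theorem's bounds are vacuous unless $M\gg d^2$, so only primes $p>d$ matter. You also silently repaired a typo: the third sum should run over $0\le k,n\le p-1$, not $p^2-1$, matching the definition of $U_{p,d}$.

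Your hedge about the constant in the third inequality should be stated with more confidence: your count is correct, and the constant $(d-1)$ in the statement is actually false. Since $g$ has no constant term, $n\equiv 0$ is always a root, so $N_0\le 1+(d-1)=d$, and this is attained: for $d=2$, $p=5$, $\bsk=(1,1)$, the polynomial $n+n^2$ vanishes mod $5$ at $n\in\{0,4\}$, so the normalized $U$-sum equals exactly $2/5=d/p>(d-1)/p$. The cited references record $d/p$ for this sum (it is the paper that slipped in transcription), so you could not have reached $(d-1)/p$ by any argument. The repair propagates harmlessly: the corresponding term in Corollary~\ref{cor:composite_exp_sum} becomes $4d/M$, and in the main theorem one replaces $\max(d-1,\log|\bsk|_\infty)$ by $\max(d,\log|\bsk|_\infty)\le d\max(1,\log|\bsk|_\infty)$, leaving the final worst-case bounds and the tractability conclusion unchanged.
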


\begin{remark}\label{rem:exp_sum}
    If the condition in Lemma~\ref{lem:exp_sum} does not hold, i.e., if all the components of $\bsk$ are divisible by $p$, we have a trivial bound on the respective exponential sum, which is 1.
\end{remark}

Using the above result, we obtain bounds on the exponential sums for the composite point sets as follows.
\begin{corollary}\label{cor:composite_exp_sum}
    Let $M\geq 2$. For any $\bsk\in \ZZ^d\setminus \{\bszero\}$, we have
    \begin{align*}
        & \left|\frac{1}{|S_{M,d}^{\comp}|}\sum_{p\in \PP_M}\sum_{n=0}^{p-1}\exp\left( 2\pi i\bsk\cdot \bsx_n^{(p)}\right)\right| \leq \frac{2(d-1)}{\sqrt{M}}+\frac{4 \min_{j\in \supp(\bsk)}\log |k_j|}{c_{\PP}M}, \\
        & \left|\frac{1}{|T_{M,d}^{\comp}|}\sum_{p\in \PP_M}\sum_{n=0}^{p^2-1}\exp\left( 2\pi i\bsk\cdot \tilde{\bsx}_n^{(p)}\right)\right| \leq \frac{4(d-1)}{M}+\frac{4\min_{j\in \supp(\bsk)}\log |k_j|}{c_{\PP}M}, \quad \text{and}\\
        & \left|\frac{1}{|U_{M,d}^{\comp}|}\sum_{p\in \PP_M}\sum_{k,n=0}^{p-1}\exp\left( 2\pi i\bsk\cdot \bsx_{k,n}^{(p)}\right)\right| \leq \frac{4(d-1)}{M}+\frac{4\min_{j\in \supp(\bsk)}\log |k_j|}{c_{\PP}M}.
    \end{align*}
\end{corollary}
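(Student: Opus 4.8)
The plan is to treat the three estimates in parallel, since they differ only in the exponent of $p$ that appears. Writing $W_p$ for the number of points contributed by the prime $p$ (so $W_p=p$ for $S_{M,d}^{\comp}$ and $W_p=p^2$ for $T_{M,d}^{\comp}$ and $U_{M,d}^{\comp}$), each normalized composite sum is exactly the weighted average
\[
\frac{1}{\sum_{p\in\PP_M}W_p}\sum_{p\in\PP_M}W_p\,\sigma_p(\bsk),
\]
where $\sigma_p(\bsk)$ denotes the per-prime \emph{normalized} exponential sum (the quantity estimated in Lemma~\ref{lem:exp_sum}). After the triangle inequality the whole task reduces to bounding $\sum_{p}W_p|\sigma_p(\bsk)|$ from above and $\sum_p W_p$ from below. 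First I would split $\PP_M$ into the \emph{generic} primes, those with $p\nmid\bsk$, and the \emph{degenerate} primes, those dividing every component of $\bsk$.

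For the generic primes I would simply insert Lemma~\ref{lem:exp_sum}, which gives $|\sigma_p(\bsk)|\le (d-1)/\sqrt p$ in the $S$-case and $|\sigma_p(\bsk)|\le (d-1)/p$ in the $T$- and $U$-cases. Using $W_p=p$ resp.\ $p^2$ with the crude bounds $\sqrt p\le\sqrt M$ and $p\le M$ in the numerator, together with the matching lower bound $\sum_{p\in\PP_M}W_p\ge |\PP_M|(M/2)^{e}$ (with $e=1$ or $e=2$) in the denominator, these contributions collapse to $2(d-1)/\sqrt M$ and $4(d-1)/M$ respectively. This is the routine half of the argument: the factor $|\PP_M|$ cancels and only the ratio $M/(M/2)$ survives.

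The substance lies in the degenerate primes, for which Remark~\ref{rem:exp_sum} only supplies the trivial bound $|\sigma_p(\bsk)|\le 1$, so their \emph{number} must be controlled. Here I would use that $\bsk\ne\bszero$, fix an index $j$ with $k_j\ne 0$, and observe that every degenerate prime divides $k_j$ and exceeds $M/2$; since distinct primes are coprime, their product divides $|k_j|\le|\bsk|_\infty$ while simultaneously exceeding $(M/2)^{m}$, where $m$ is their number. This forces $m\le \log|\bsk|_\infty/\log(M/2)$. Each such prime adds at most $W_p\le M$ (resp.\ $M^2$) to the numerator, so after dividing by $\sum_p W_p$ and invoking the lower bound $|\PP_M|\ge c_{\PP}M/\log M$ from \eqref{eq:card_PPM}, the degenerate primes produce a term proportional to $\log|\bsk|_\infty\,\log M/(c_{\PP}M\log(M/2))$, which is the origin of the logarithmic summand.

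The main obstacle is the constant bookkeeping in this last step. Converting $\log M/\log(M/2)$ into the clean factor claimed in the statement uses $\log(M/2)\ge\tfrac12\log M$, valid for $M\ge 4$, with the handful of small $M$ checked directly; and pinning down the precise constant in the $T$- and $U$-cases is the delicate point, because the weight $W_p=p^2$ doubles the exponent of $p$. To squeeze out the extra saving one may exploit that a prime dividing $\bsk$ only to the first power still admits a \emph{nontrivial} estimate---the $T$/$U$ sum for such $p$ reduces to an $S$-type sum for $\bsk/p$---so that the genuinely trivial bound is needed only for primes with $p^2\mid\bsk$, whose sparser count supplies the required factor. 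Everything else is merely assembling the two contributions together with the cardinality estimates \eqref{eq:card_PPM}.
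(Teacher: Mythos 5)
Your decomposition is exactly the paper's: split $\PP_M$ into primes with $p\nmid\bsk$ and primes with $p\mid\bsk$, feed Lemma~\ref{lem:exp_sum} and the lower bound $\sum_{p\in\PP_M}W_p\geq |\PP_M|(M/2)^e$ into the generic part (yielding $2(d-1)/\sqrt{M}$ resp.\ $4(d-1)/M$, as you compute), and control the \emph{number} $m$ of degenerate primes by the coprimality/divisor-counting argument together with \eqref{eq:card_PPM}. One cosmetic difference: the paper counts prime divisors in base $\lceil M/2\rceil+1$ and uses $\lceil M/2\rceil+1\geq\sqrt{M}$, which holds for \emph{all} $M\geq 2$ and gives $m\leq 2\log|\bsk|_{\infty}/\log M$ uniformly; your variant with $\log(M/2)$ forces the $M\geq 4$ restriction plus hand checks at $M=2,3$ (which do go through, but only because $c_{\PP}<1$ leaves slack), so the paper's choice of base is the cleaner move.

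The substantive point is your last paragraph, and there your instinct is half right. You correctly sense that the routine argument does \emph{not} deliver the constant $4$ in the $T$- and $U$-cases: since $W_p=p^2$ spreads over $((M/2)^2,M^2]$, a factor-$4$ range, one gets $\frac{4m}{|\PP_M|}\leq \frac{8\log|\bsk|_{\infty}}{c_{\PP}M}$, i.e.\ the constant $8$ --- and the paper, which only says the remaining cases are ``similar,'' in fact incurs the same factor $2$ if one tracks its constants. Your proposed repair works for $T_{p,d}$: if $p\mid\bsk$ but $p^2\nmid\bsk$, writing $\bsk=p\bsl$ and using that each residue class of $n$ modulo $p$ occurs exactly $p$ times for $0\leq n<p^2$ reduces the $T$-sum to the $S$-sum for $\bsl$, bounded by $(d-1)/\sqrt{p}$, so the trivial bound is needed only when $p^2\mid\bsk$, and those primes are half as numerous. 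But it \emph{fails} for $U_{p,d}$: the phases there are $\bigl(k\sum_{j}k_jn^j\bigr)/p$ with denominator $p$, not $p^2$, so whenever $p\mid\bsk$ every summand equals $1$ and the normalized sum is exactly $1$, regardless of whether $p^2\mid\bsk$; no sparser count is available, and by this method (yours or the paper's) the honest constant in the third inequality is $8$, not $4$. The discrepancy originates in the paper's statement rather than in your argument, and it is harmless downstream (the Theorem only needs the order $d/M$ up to an absolute constant), but as written your proof should either state the $T$/$U$ bounds with constant $8$ or restrict the $p^2\mid\bsk$ refinement to the $T$-case.
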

\begin{proof}
Since each inequality can be shown in a similar way, we only give a proof for the first inequality. Using the triangle inequality, Lemma~\ref{lem:exp_sum} and Remark~\ref{rem:exp_sum}, it holds that
\begin{align*}
    \left|\frac{1}{|S_{M,d}^{\comp}|}\sum_{p\in \PP_M}\sum_{n=0}^{p-1}\exp\left( 2\pi i\bsk\cdot \bsx_n^{(p)}\right)\right| & \leq \frac{1}{|S_{M,d}^{\comp}|}\sum_{p\in \PP_M}\left|\sum_{n=0}^{p-1}\exp\left( 2\pi i\bsk\cdot \bsx_n^{(p)}\right)\right| \\
    & \leq \frac{1}{|S_{M,d}^{\comp}|}\sum_{\substack{p\in \PP_M\\ p\nmid \bsk}}(d-1)\sqrt{p}+\frac{1}{|S_{M,d}^{\comp}|}\sum_{\substack{p\in \PP_M\\ p\mid \bsk}}p.
\end{align*}
Regarding the first term on the rightmost side above, 
the sum over $p\in \PP_M$ is bounded from above by $|\PP_M|(d-1)\sqrt{M}$, whereas $|S_{M,d}^{\comp}|$ is bounded from below by $|\PP_M|(M/2)$. Thus we have
\[ \frac{1}{|S_{M,d}^{\comp}|}\sum_{\substack{p\in \PP_M\\ p\nmid \bsk}}(d-1)\sqrt{p}\leq \frac{2(d-1)}{\sqrt{M}}.\]
For the second term, by using the fact that, for any integers $k,n\in \NN$, $k$ has at most $\log_n k$ prime divisors greater than $n$, we have
\begin{align*}
    \frac{1}{|S_{M,d}^{\comp}|}\sum_{\substack{p\in \PP_M\\ p\mid \bsk}}p & \leq \frac{M}{|S_{M,d}^{\comp}|}\sum_{\substack{p\in \PP_M\\ p\mid \bsk}}1  = \frac{M}{|S_{M,d}^{\comp}|}\sum_{p\in \PP_M}\prod_{j\in \supp(\bsk)}\bsone_{p\mid k_j}\\
    & \leq \frac{M}{|S_{M,d}^{\comp}|}\min_{j\in \supp(\bsk)}\sum_{p\in \PP_M}\bsone_{p\mid k_j}\leq \frac{M}{|S_{M,d}^{\comp}|}\min_{j\in \supp(\bsk)}\log_{\lceil M/2\rceil+1}|k_j|,
\end{align*}
where $\bsone_{A}$ denotes the indicator function of an event $A$. As $|S_{M,d}^{\comp}|$ is bounded from below by $|\PP_M|(M/2)$, which itself is further bounded from below by using \eqref{eq:card_PPM}, we obtain
\begin{align*}
    \frac{1}{|S_{M,d}^{\comp}|}\sum_{\substack{p\in \PP_M\\ p\mid \bsk}}p \leq \frac{2\log M}{c_{\PP}M}\cdot \frac{2\min_{j\in \supp(\bsk)}\log |k_j|}{\log M}=\frac{4\min_{j\in \supp(\bsk)}\log |k_j|}{c_{\PP}M},
\end{align*}
which implies the result.
\end{proof}

\begin{remark}
    By considering the composition of Korobov's $p$-sets with different primes $p$, we obtain bounds on the exponential sums without any need to distinguish the cases whether $p\mid \bsk$ or $p\nmid \bsk$, which is crucial in the worst-case error analysis in the space $F_d$. In comparison, in \cite{D14}, the case $p\mid \bsk$ is dealt with the H\"{o}lder continuity and the case $p\nmid \bsk$ is with the absolute convergence of Fourier series. This is how the worst-case error of (non-composite) Korobov's $p$-sets in the space $K_{d,\alpha,p}$ is analyzed.
\end{remark}

\subsection{Worst-case error bound and tractability}
We now prove upper bounds on the worst-case error of our QMC rules in $F_d$.
\begin{theorem}
    Let $M\geq 2$. The worst-case error of QMC rules $Q_{S_{M,d}^{\comp}}$, $Q_{T_{M,d}^{\comp}}$ and $Q_{U_{M,d}^{\comp}}$ in $F_d$ are bounded from above by
    \begin{align*}
        & e^{\wor}(F_d,Q_{S_{M,d}^{\comp}})\leq \frac{8d}{c_{\PP}|S_{M,d}^{\comp}|^{1/4}},\\
        & e^{\wor}(F_d,Q_{T_{M,d}^{\comp}})\leq \frac{8d}{c_{\PP}|T_{M,d}^{\comp}|^{1/3}},\quad \text{and}\\
        & e^{\wor}(F_d,Q_{U_{M,d}^{\comp}})\leq \frac{8d}{c_{\PP}|U_{M,d}^{\comp}|^{1/3}},
    \end{align*}
    respectively.
\end{theorem}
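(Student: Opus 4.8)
The plan is to reduce each worst-case error to a supremum of normalized exponential sums over the nonzero frequencies, then substitute the bounds of Corollary~\ref{cor:composite_exp_sum}, and finally convert the parameter $M$ into the cardinality of the point set by an elementary count of primes.

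First I would fix a QMC rule $Q$ with nodes $\bsx_1,\ldots,\bsx_N$ and equal weights $1/N$, and exploit that every $f\in F_d$ has an absolutely convergent Fourier series, since $\sum_{\bsk}|\hat{f}(\bsk)|\leq \|f\|$. Expanding $f$ into its Fourier series and interchanging the (absolutely convergent) sums gives
\begin{align*}
I_d(f)-Q(f)=-\sum_{\bsk\in \ZZ^d\setminus \{\bszero\}}\hat{f}(\bsk)\left( \frac{1}{N}\sum_{n=1}^{N}\exp\left(2\pi i\bsk\cdot \bsx_n\right)\right),
\end{align*}
because $I_d(f)=\hat{f}(\bszero)$ while the $\bszero$-frequency term of $Q(f)$ also equals $\hat{f}(\bszero)$ and thus cancels. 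Writing $|\hat{f}(\bsk)|=\big(|\hat{f}(\bsk)|\max(1,\log|\bsk|_{\infty})\big)/\max(1,\log|\bsk|_{\infty})$, taking absolute values, and pulling the supremum out in front of $\sum_{\bsk}|\hat{f}(\bsk)|\max(1,\log|\bsk|_{\infty})=\|f\|\leq 1$, I obtain
\begin{align*}
e^{\wor}(F_d,Q)\leq \sup_{\bsk\in \ZZ^d\setminus \{\bszero\}}\frac{1}{\max(1,\log|\bsk|_{\infty})}\left| \frac{1}{N}\sum_{n=1}^{N}\exp\left(2\pi i\bsk\cdot \bsx_n\right)\right|.
\end{align*}

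Next I would substitute the three estimates of Corollary~\ref{cor:composite_exp_sum} into this supremum. The crucial point, and indeed the reason the weight $\max(1,\log|\bsk|_{\infty})$ is built into the norm of $F_d$, is that the factor $\log|\bsk|_{\infty}$ appearing in the second summand of each exponential-sum bound is cancelled exactly by this weight: since $\max(1,\log|\bsk|_{\infty})\geq 1$ and $\log|\bsk|_{\infty}/\max(1,\log|\bsk|_{\infty})\leq 1$ for every $\bsk\neq \bszero$, the dependence on $\bsk$ disappears and the supremum is harmless. For $S_{M,d}^{\comp}$ this leaves $e^{\wor}(F_d,Q_{S_{M,d}^{\comp}})\leq 2(d-1)/\sqrt{M}+4/(c_{\PP}M)$, and analogously $4(d-1)/M+4/(c_{\PP}M)$ for both $T_{M,d}^{\comp}$ and $U_{M,d}^{\comp}$.

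Finally I would translate the bounds in $M$ into bounds in the cardinality. Since every $p\in \PP_M$ satisfies $p\leq M$ and $|\PP_M|\leq M/2$, one has $|S_{M,d}^{\comp}|=\sum_p p\leq M^2$ and $|T_{M,d}^{\comp}|=|U_{M,d}^{\comp}|=\sum_p p^2\leq M^3$, whence $|S_{M,d}^{\comp}|^{1/4}\leq \sqrt{M}$ and $|T_{M,d}^{\comp}|^{1/3}\leq M$. Using $1/\sqrt{M}\leq 1/|S_{M,d}^{\comp}|^{1/4}$ (and $1/M\leq 1/\sqrt{M}$ for the lower-order term) collapses the $S$-bound to $\big(2(d-1)+4/c_{\PP}\big)/|S_{M,d}^{\comp}|^{1/4}$, and $0<c_{\PP}<1$ together with $d\geq 1$ gives $2(d-1)+4/c_{\PP}\leq 8d/c_{\PP}$; the $T$- and $U$-cases are identical with exponent $1/3$. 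The argument is routine once the duality representation is in hand: the only mild obstacle is the constant bookkeeping needed to land on exactly $8d/c_{\PP}$, which is why the crude count $|\PP_M|\leq M/2$ rather than the sharper estimate \eqref{eq:card_PPM} is the convenient tool for the cardinality step.
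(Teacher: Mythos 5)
Your proposal is correct and follows essentially the same route as the paper: both expand $f$ into its Fourier series, apply the triangle inequality together with Corollary~\ref{cor:composite_exp_sum}, absorb the $\log|\bsk|_{\infty}$ factor into the weight $\max(1,\log|\bsk|_{\infty})$ built into the norm, and finish with the trivial cardinality bounds $|S_{M,d}^{\comp}|\leq M^2$ and $|T_{M,d}^{\comp}|=|U_{M,d}^{\comp}|\leq M^3$. Your duality-style supremum formulation and slightly different constant bookkeeping (bounding the two terms separately rather than passing through $\max(d-1,\log|\bsk|_{\infty})$ as the paper does) are cosmetic rearrangements of the same estimate, landing on the same constant $8d/c_{\PP}$.
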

\begin{proof}
As in Corollary~\ref{cor:composite_exp_sum}, we only give a proof for the first worst-case error bound, as the remaining two bounds can be shown similarly.
For $f\in F_d$, by expanding it into the Fourier series and then applying the triangle inequality and Corollary~\ref{cor:composite_exp_sum}, we have
\begin{align*}
    & \left|I_d(f)-\frac{1}{|S_{M,d}^{\comp}|}\sum_{p\in \PP_M}\sum_{n=0}^{p-1}f(\bsx_n^{(p)})\right| \\
    & \quad = \left| \sum_{\bsk\in \ZZ^d\setminus \{\bszero\}}\hat{f}(\bsk)\frac{1}{|S_{M,d}^{\comp}|}\sum_{p\in \PP_M}\sum_{n=0}^{p-1}\exp(2\pi i\bsk\cdot \bsx_n^{(p)})\right|\\
    & \quad \leq \sum_{\bsk\in \ZZ^d\setminus \{\bszero\}}| \hat{f}(\bsk)|\left|\frac{1}{|S_{M,d}^{\comp}|}\sum_{p\in \PP_M}\sum_{n=0}^{p-1}\exp(2\pi i\bsk\cdot \bsx_n^{(p)})\right|\\
    & \quad \leq \sum_{\bsk\in \ZZ^d\setminus \{\bszero\}}| \hat{f}(\bsk)|\left( \frac{2(d-1)}{\sqrt{M}}+\frac{4\min_{j\in \supp(\bsk)}\log |k_j|}{c_{\PP}M}\right) \\
    & \quad \leq \frac{8}{c_{\PP}\sqrt{M}}\sum_{\bsk\in \ZZ^d\setminus \{\bszero\}}| \hat{f}(\bsk)|\max\left( d-1,\min_{j\in \supp(\bsk)}\log |k_j|\right) \\
    & \quad \leq \frac{8d}{c_{\PP}\sqrt{M}}\sum_{\bsk\in \ZZ^d\setminus \{\bszero\}}| \hat{f}(\bsk)|\max\left(1,\min_{j\in \supp(\bsk)}\log |k_j|\right) \leq \frac{8d}{c_{\PP}\sqrt{M}}\|f\|.
\end{align*}
Thus the worst-case error is bounded by
\[ e^{\wor}(F_d,Q_{S_{M,d}^{\comp}})\leq \frac{8d}{c_{\PP}\sqrt{M}}.\]
As the cardinality of $S_{M,d}^{\comp}$ is trivially bounded from above by $M^2$, the worst-case error bound in the theorem follows.
\end{proof}

In terms of the total number of function evaluations, the convergence rate of $Q_{S_{M,d}^{\comp}}$ is slower than $Q_{T_{M,d}^{\comp}}$ and $Q_{U_{M,d}^{\comp}}$. By applying the latter two bounds on the worst-case error in $F_d$, the information complexity is bounded as follows.
\begin{corollary}
    For any tolerance $\epsilon\in (0,1)$ and dimension $d\in \NN$, the information complexity both for the normalized and absolute error criteria is bounded by
    \[ N(\varepsilon,d)\leq (8c_{\PP}^{-1})^3\varepsilon^{-3}d^3.\]
\end{corollary}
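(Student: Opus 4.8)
The plan is to deduce the complexity bound from the worst-case error estimate for $Q_{T_{M,d}^{\comp}}$ (the one for $Q_{U_{M,d}^{\comp}}$ is identical) by exhibiting, for each $\varepsilon$ and $d$, a \emph{single} QMC rule that meets the tolerance with few enough nodes. First I would record that the two error criteria coincide here: since $I_d(f)=\hat f(\bszero)$ and the weight attached to $\bsk=\bszero$ in $\|\cdot\|$ is $\max(1,\log 0)=1$, we have $|I_d(f)|=|\hat f(\bszero)|\le \|f\|$, with equality for $f\equiv 1$; hence $e^{\wor}(F_d,\emptyset)=1$ and it suffices to treat the absolute criterion $e^{\wor}(F_d,Q)\le\varepsilon$.

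The two ingredients I would isolate are (i) the sharper, pre-cardinality estimate $e^{\wor}(F_d,Q_{T_{M,d}^{\comp}})\le 8d/(c_{\PP}M)$, which the proof of the Theorem in fact establishes (via Corollary~\ref{cor:composite_exp_sum}) before passing to the cardinality form, and (ii) the elementary bound $|T_{M,d}^{\comp}|=\sum_{p\in\PP_M}p^2\le |\PP_M|\,M^2\le (M/2)\,M^2=M^3/2$, using only that $(\lceil M/2\rceil,M]$ contains at most $M/2$ integers, hence at most $M/2$ primes. Given $\varepsilon\in(0,1)$ and $d\in\NN$, I set $t:=8d/(c_{\PP}\varepsilon)$ and choose $M:=\lceil t\rceil$, which is $\ge 9\ge 2$ since $t>8$ (here $d\ge 1$, $0<c_{\PP}<1$, $0<\varepsilon<1$). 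Then $M\ge t$ gives $e^{\wor}(F_d,Q_{T_{M,d}^{\comp}})\le 8d/(c_{\PP}M)\le 8d/(c_{\PP}t)=\varepsilon$, so this rule meets the tolerance.

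It then remains to check that its node count does not exceed the claimed bound $(8c_{\PP}^{-1})^3\varepsilon^{-3}d^3=t^3$. From (ii), $|T_{M,d}^{\comp}|\le M^3/2<(t+1)^3/2$, and since $t>8$ we have $(1+1/t)^3<(9/8)^3=729/512<2$, whence $(t+1)^3/2<t^3$. Therefore $|T_{M,d}^{\comp}|<t^3$, giving $N(\varepsilon,d)\le |T_{M,d}^{\comp}|\le (8c_{\PP}^{-1})^3\varepsilon^{-3}d^3$, as desired.

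The one point that needs care — and the only real obstacle — is that the achievable node counts $\sum_{p\in\PP_M}p^2$ form a discrete set, so one cannot hit $t^3$ exactly, and rounding $M$ up could a priori push the count past $t^3$. This is precisely why I use the error-form bound $8d/(c_{\PP}M)$ rather than the weaker cardinality-form bound $8d/(c_{\PP}|T_{M,d}^{\comp}|^{1/3})$ of the Theorem: the latter would only say that \emph{at least} $t^3$ nodes suffice, which is the wrong direction. The generous factor $1/2$ in $|T_{M,d}^{\comp}|\le M^3/2$, together with $t>8$, absorbs the rounding loss $(t+1)^3$ versus $t^3$. (One could gain even more slack from the sharper $|T_{M,d}^{\comp}|\le C_{\PP}M^3/\log M$ implied by \eqref{eq:card_PPM}, but the crude $M^3/2$ already suffices.)
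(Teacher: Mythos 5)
Your proof is correct, and at the top level it takes the same route as the paper: establish $e^{\wor}(F_d,\emptyset)=1$ (so the normalized and absolute criteria coincide) and then invert the worst-case error bound for $Q_{T_{M,d}^{\comp}}$. The substantive difference is in the inversion step, and your version is actually the more rigorous one. The paper inverts the cardinality-form bound $e^{\wor}(F_d,Q_{T_{M,d}^{\comp}})\le 8d/(c_{\PP}|T_{M,d}^{\comp}|^{1/3})$ and asserts that $N(\varepsilon,d)$ is at most the minimal achievable $|T_{M,d}^{\comp}|$ satisfying $8d/(c_{\PP}|T_{M,d}^{\comp}|^{1/3})\le\varepsilon$; but, exactly as you observe, this inequality only certifies the tolerance once $|T_{M,d}^{\comp}|\ge t^3$ with $t:=8d/(c_{\PP}\varepsilon)$, and since the achievable cardinalities $\sum_{p\in\PP_M}p^2$ form a discrete set, the minimal achievable value exceeding $t^3$ could a priori be strictly larger than $t^3$, so the paper's one-line inversion does not literally deliver the stated constant. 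Your detour through the $M$-form bound $e^{\wor}(F_d,Q_{T_{M,d}^{\comp}})\le 8d/(c_{\PP}M)$ --- which the theorem's proof does establish en route, before passing to cardinalities --- together with the elementary estimate $|T_{M,d}^{\comp}|\le (M-\lceil M/2\rceil)M^2\le M^3/2$ and the choice $M=\lceil t\rceil$ closes precisely this gap: since $t>8$ forces $(1+1/t)^3<(9/8)^3<2$, the rounding loss is absorbed and $|T_{M,d}^{\comp}|<t^3=(8c_{\PP}^{-1})^3\varepsilon^{-3}d^3$. All the supporting details check out: $M\ge 9\ge 2$, $\PP_M\neq\emptyset$ by \eqref{eq:card_PPM} so the rule exists, and the initial-error computation matches the paper's. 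In short, you prove the same statement by the same strategy, but with a repair of a discreteness issue that the paper's own proof glosses over; what your extra care buys is that the constant $(8c_{\PP}^{-1})^3$ is genuinely attained by an explicit rule rather than by an idealized continuum of node counts.
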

\begin{proof}
The initial error in $F_d$ is given by
\[ e^{\wor}(F_d,\emptyset)=\sup_{\substack{f\in F_d\\ \|f\|\leq 1}}| \hat{f}(\bszero)|=1,\]
where the supremum is attained when $\hat{f}(\bszero)=\pm 1$ and $\hat{f}(\bsk)=0$ for all $\bsk\in \ZZ^d\setminus \{\bszero\}$. Thus the information complexity for the normalized error criterion coincides with that for the absolute error criterion. To obtain an upper bound on $N(\varepsilon,d)$, it suffices to consider the inequality
\[ e^{\wor}(F_d,Q_{T_{M,d}^{\comp}})\leq \frac{8d}{c_{\PP}|T_{M,d}^{\comp}|^{1/3}}\leq \varepsilon. \]
The result of the corollary follows from the fact that $N(\varepsilon,d)$ is less than or equal to the minimum value of $|T_{M,d}^{\comp}|$ which satisfies this inequality.
\end{proof}

This way, we clearly see that the multivariate integration problem in $F_d$ is polynomially tractable. The corresponding $\varepsilon$- and $d$-exponents are less than or equal to 3. The following problems are open for future work.

\begin{itemize}
    \item Study whether the $\varepsilon$-  and $d$-exponents in $F_d$ can be improved.
    \item Examine the relation of the functions in $F_d$ to a class of continuous functions, such as log-H\"{o}lder continuous functions.
    \item Discuss the possibility to extend the result of this note to an unweighted function space with the norm
    \[ \|f\|:=\sum_{\bsk\in \ZZ^d}|\hat{f}(\bsk)| g\left(\min_{j\in \supp(\bsk)} |k_j|\right),\]
    for $g: \NN\cup\{0\}\to \RR_{>0}$ being a more slowly increasing function than the logarithmic function or even a constant function.
\end{itemize}

\bibliographystyle{amsplain}
\bibliography{ref.bib}

\end{document}